\newtheorem{prop}{\bf Proposition}[section]
\newtheorem{thm}[prop]{\bf Theorem}
\newtheorem{cor}[prop]{\bf Corollary}
\newtheorem{lem}[prop]{\bf Lemma}
\newtheorem{rmk}[prop]{\it Remark}
\newtheorem{conj}{\bf Conjecture}
\begin{document}

\title[CND kernels and proper actions on $E\subseteq L^1$]{{\bf\Large Almost invariant CND kernels and proper uniformly Lipschitz actions on subspaces of $L^1$}}

\author[I. Vergara]{Ignacio Vergara}
\address{Departamento de Matem\'atica y Ciencia de la Computaci\'on, Universidad de Santiago de Chile, Las Sophoras 173, Estaci\'on Central, Chile.}

\email{ign.vergara.s@gmail.com}
\thanks{Part of the research was carried out at the Leonhard Euler St. Petersburg International Mathematical Institute and supported by the Ministry of Science and Higher Education of the Russian Federation (Agreement № 075–15–2022–287 dated 06.04.2022).}

\makeatletter
\@namedef{subjclassname@2020}{%
  \textup{2020} Mathematics Subject Classification}
\makeatother

\subjclass[2020]{Primary 22D55; Secondary 43A35, 20F67, 46E30}
%

\keywords{Uniformly Lipschitz affine actions, subspaces of $L^1$, conditionally negative definite kernels, products of quasi-trees, weakly amenable groups, a-TTT-menable groups}

\begin{abstract}
We define the notion of almost invariant conditionally negative definite kernel and use it to give a characterisation of groups admitting a proper uniformly Lipschitz affine action on a subspace of an $L^1$ space. We show that this condition is satisfied by groups acting properly on products of quasi-trees, weakly amenable groups with Cowling--Haagerup constant 1, and a-TTT-menable groups.
\end{abstract}


\begingroup
\def\uppercasenonmath#1{} 
\let\MakeUppercase\relax 
\maketitle
\endgroup

\section{{\bf Introduction}}

A group is said to have the \textit{Haagerup property} (or that it is \textit{a-T-menable}) if it admits a (metrically) proper isometric affine action on a Hilbert space.

Every Hilbert space can be isometrically embedded into an $L^1$ space; see for instance \cite[Theorem 6.8]{ChDrHa}. On the other hand, the fact that the norm on an $L^1$ space defines a conditionally negative definite kernel shows that every group acting properly by isometries on a subspace of an $L^1$ space has the Haagerup property; see for instance \cite[Corollary 6.23]{ChDrHa}. As a consequence, the Haagerup property can be characterised by the existence of a proper isometric affine action on a subspace of an $L^1$ space.

In this paper, we study a weakening of this property by considering actions that are not necessarily isometric, but which are uniformly Lipschitz.

Let $\Gamma$ be a countable group and let $E$ be a (real) Banach space. We denote by $\mathcal{B}(E)$ the algebra of bounded operators on $E$, and by $\operatorname{GL}(E)$ the subset of $\mathcal{B}(E)$ consisting of invertible operators. Observe that $\operatorname{GL}(E)$ is a group for the composition. A \textit{representation} of $\Gamma$ on $E$ is a group homomorphism $\pi:\Gamma\to\operatorname{GL}(E)$. An \textit{affine action} $\Gamma\curvearrowright^\sigma E$ is given by a representation $\pi:\Gamma\to\operatorname{GL}(E)$ and a map $b:\Gamma\to E$ satisfying
\begin{align*}
\forall s,t\in\Gamma,\quad b(st)=\pi(s)b(t)+b(s).
\end{align*}
We say that $b$ is a \textit{cocycle} for the representation $\pi$. The aforementioned affine action is then given by
\begin{align}\label{affn_act}
\forall s\in\Gamma,\ \forall v\in E,\quad \sigma(s)v=\pi(s)v+b(s).
\end{align}
We call $\pi$ the \textit{linear part} of $\sigma$. We say that the action $\sigma$ is $C$\textit{-Lipschitz} ($C\geq 1$) if
\begin{align*}
\forall s\in\Gamma,\ \forall v,w\in E,\quad \lVert \sigma(s)v-\sigma(s)w\rVert\leq C\lVert v-w\rVert.
\end{align*}
This is equivalent to the fact that $\pi$ is \textit{uniformly bounded} by $C$:
\begin{align*}
\sup_{s\in\Gamma}\lVert \pi(s)\rVert \leq C.
\end{align*}
We say that $\sigma$ \textit{uniformly Lipschitz} if it is $C$-Lipschitz for some $C\geq 1$. Notice that an action is $1$-Lipschitz if and only if it is isometric.

We say that an action of a group $\Gamma$ on a metric space $(X,d)$ is \textit{(metrically) proper} if, for every $x\in X$ and every $R>0$, the set
\begin{align*}
\{s\in\Gamma\ \mid\ d(s\cdot x,x)\leq R\}
\end{align*}
is finite. We usually write this condition as
\begin{align*}
d(s\cdot x,x)\to\infty\quad\text{as}\quad s\to\infty.
\end{align*}
Observe that a uniformly Lipschitz affine action as in \eqref{affn_act} is proper if and only if the cocycle $b$ is proper:
\begin{align*}
\lVert b(s)\rVert \to\infty\quad\text{as}\quad s\to\infty.
\end{align*}

Our first result gives a necessary and sufficient condition for a group to admit a proper uniformly Lipschitz affine action on a subspace of an $L^1$ space. Let $\Gamma$ be a countable group and let $\psi:\Gamma\times\Gamma\to[0,\infty)$ be a conditionally negative definite (CND) kernel. This means that there is a Hilbert space $\mathcal{H}$ and a map $f:\Gamma\to\mathcal{H}$ such that
\begin{align}\label{def_CND}
\psi(x,y)=\lVert f(x)-f(y)\rVert^2,\quad\forall x,y\in\Gamma.
\end{align}
We say that $\psi$ is \textit{almost invariant} if there exists $C>0$ such that
\begin{align*}
\psi(sx,sy)\leq\psi(x,y) + C,\quad\forall s,x,y\in\Gamma.
\end{align*}
In addition, we say that $\psi$ is \textit{proper} if
\begin{align*}
\psi(x,e)\to\infty\quad\text{as}\quad x\to\infty,
\end{align*}
where $e$ is the identity element of $\Gamma$.

\begin{thm}\label{Thm_caract}
Let $\Gamma$ be a countable group. The following are equivalent:
\begin{itemize}
\item[(i)] There exists a proper almost invariant CND kernel $\psi:\Gamma\times\Gamma\to[0,\infty)$.
\item[(ii)] There exist a measure space $(\Omega,\mu)$, a closed subspace $E\subseteq L^1(\Omega,\mu)$, and a proper uniformly Lipschitz affine action of $\Gamma$ on $E$.
\item[(ii)'] For every $\varepsilon>0$, there exist a measure space $(\Omega,\mu)$, a closed subspace $E\subseteq L^1(\Omega,\mu)$, and a proper $(1+\varepsilon)$-Lipschitz affine action of $\Gamma$ on $E$.
\end{itemize}
\end{thm}

In the case of isometric actions, a group acts properly on a Hilbert space if and only if it does so on a subset of an $L^p$ space with $p\in(0,2]$; see \cite[Corollary 1.5]{ChDrHa} and \cite[Corollary 6.23]{ChDrHa}. We do not know if this characterisation can be extended to uniformly Lipschitz actions, but Theorem \ref{Thm_caract} allows us to conclude the following.

\begin{cor}\label{Cor_Lp}
Let $p\in(0,2]$ and let $\Gamma$ be a countable group admitting a proper uniformly Lipschitz action on a subset of an $L^p$ space. Then $\Gamma$ has a proper uniformly Lipschitz affine action on a subspace of an $L^1$ space.
\end{cor}

One nice feature of Theorem \ref{Thm_caract} is that almost invariant CND kernels arise in various contexts, which allows us to construct uniformly Lipschitz actions on $E\subseteq L^1$ for different classes of groups. We begin by focusing on groups acting on products of quasi-trees. A quasi-tree is a graph that is quasi-isometric to a tree. Here we view (the set of vertices of) a graph as a metric space endowed with the edge-path distance. For a finite family of graphs $(X_i,d_i)$ $(i=1,\ldots,N)$, we endow the product $X=X_1\times\cdots\times X_N$ with the $\ell^1$-combination of the metrics:
\begin{align}\label{l1_combi}
d(x,y)=\sum_{i=1}^N d_i(x_i,y_i).
\end{align}
This is exactly the edge-path distance on $X$ with its natural graph structure.

\begin{thm}\label{Thm_QT}
Let $\Gamma$ be a countable group acting properly by isometries on a finite product of quasi-trees. Then $\Gamma$ admits a proper uniformly Lipschitz affine action on a subspace of an $L^1$ space.
\end{thm}

The class of groups admitting proper actions on products of quasi-trees is very rich. It contains mapping class groups \cite{BeBrFu}, residually finite hyperbolic groups \cite{BeBrFu}, and some $3$-manifold groups \cite{HaNgYa}; see also \cite{NguYan} for more examples. Theorem \ref{Thm_QT} is motivated by the following conjecture; see \cite[Conjecture 35]{Now2}.

\begin{conj}[Shalom]\label{conj_Sha}
Every hyperbolic group admits a proper uniformly Lipschitz affine action on a Hilbert space.
\end{conj}

This conjecture has been verified for lattices in the rank 1 Lie groups $\operatorname{Sp}(n,1)$ and $F_{4,-20}$; see \cite[Theorem A]{Nis} and \cite[Corollary 1.7]{Ver2}. These are very interesting classes of groups because they satisfy Property (T), meaning that every isometric action on a Hilbert space has a fixed point, which implies in particular that such actions cannot be proper. As a matter of fact, ``most'' hyperbolic groups have Property (T); see  \cite{KotKot, Zuk} for precise statements and proofs. However, for uniformly Lipschitz actions, the situation is not so well understood, and Conjecture \ref{conj_Sha} remains open in general.

In light of the results in \cite{BeBrFu}, Theorem \ref{Thm_QT} gives a solution to a variation of Conjecture \ref{conj_Sha} for residually finite hyperbolic groups, by replacing $L^2$ by a subspace of $L^1$. Furthermore, the residual finiteness hypothesis can be removed. More precisely, every hyperbolic group admits a proper uniformly Lipschitz affine action on $E\subseteq L^1$; see \cite[Theorem 1.2]{Ver} or Theorem \ref{Thm_a-TTT} below for a more general statement. The proof of this result also consists in constructing a proper almost invariant CND kernel.

We will give two different proofs of Theorem \ref{Thm_QT}. Our original proof is inspired by \cite{CDHL}, and it consists in showing that a certain approximation of the edge-path distance on a quasi-tree is a CND kernel. Afterwards, we realised that the main result of \cite{Ker} can be used to give a much shorter proof by simply considering the pull-back of the metric on a tree. We believe that both proofs are interesting, and that they provide different insights on the existence of such kernels. Hence we decided to present them both in Section \ref{S_QT}.

Let us mention some more recent progress in this direction. Using different methods, Dru\c{t}u and Mackay \cite{DruMac} showed that mapping class groups and residually finite hyperbolic groups admit proper $(2+\varepsilon)$-Lipschitz actions on $\ell^1$ and on $L^1([0,1])$. As observed in \cite[Remark 1.12]{DruMac}, for groups with Property (T), this cannot be improved in order to obtain $(1+\varepsilon)$-Lipschitz actions as in Theorem \ref{Thm_caract}, which shows that the class of subspaces of $L^1$ provides a much less rigid framework than the space $L^1$ itself.

Next, we apply Theorem \ref{Thm_caract} to weakly amenable groups. A countable group $\Gamma$ is said to be \textit{weakly amenable} if there exists a sequence of finitely supported functions $\varphi_n:\Gamma\to\mathbb{C}$ converging pointwise to 1, and a constant $C\geq 1$ such that $\sup_n\lVert \varphi_n\rVert _{B_2(\Gamma)}\leq C$. Here $B_2(\Gamma)$ stands for the space of Herz--Schur multipliers on $\Gamma$; see \cite[Appendix D]{BroOza} for details. The \textit{Cowling--Haagerup constant} $\boldsymbol\Lambda(\Gamma)$ is the infimum of all $C\geq 1$ such that the condition above holds. Our second result is also motivated by an open conjecture.

\begin{conj}[Cowling]\label{conj_Cow}
Every weakly amenable group with Cowling--Haagerup constant 1 has the Haagerup property.
\end{conj}

This conjecture was originally formulated as an equivalence; see \cite[\S 1.3.1]{CCJJV}. Later, the results in \cite{CoStVa, Oza, OzaPop} provided many examples of groups with the Haagerup property that are not weakly amenable. One such group is the wreath product $(\mathbb{Z}/2\mathbb{Z})\wr\mathbb{F}_2$. Conjecture \ref{conj_Cow} has been verified for every known example of group with $\boldsymbol\Lambda(\Gamma)=1$, which includes groups acting properly on finite-dimensional $\operatorname{CAT}(0)$ cube complexes \cite{GueHig, Miz, NibRee} and Baumslag--Solitar groups \cite{GalJan}. However, it remains open in general. We prove the following variation of Conjecture \ref{conj_Cow}. Its proof is based on Knudby's work on semigroups of Herz--Schur multipliers \cite{Knu}.

\begin{thm}\label{Thm_WA}
Every weakly amenable group with Cowling--Haagerup constant 1 admits a proper uniformly Lipschitz affine action on a subspace of an $L^1$ space.
\end{thm}

Our last application of Theorem \ref{Thm_caract} concerns a-TTT-menable groups, as defined in \cite{Oza2}. Let $\mathcal{H}$ be a (complex) Hilbert space and let $\mathcal{U}(\mathcal{H})$ denote its unitary group. A map $b:\Gamma\to\mathcal{H}$ is called a \textit{wq-cocycle} if there is a map (not necessarily a representation) $\pi:\Gamma\to\mathcal{U}(\mathcal{H})$ such that
\begin{align*}
\sup_{s,t\in\Gamma}\lVert b(st)-\pi(s)b(t)-b(s)\rVert < \infty.
\end{align*}
A group is said to be \textit{a-TTT-menable} if it admits a proper wq-cocycle. Since every cocycle is a wq-cocycle, every a-T-menable group is a-TTT-menable. Moreover, all hyperbolic groups are a-TTT-menable; see \cite[\S 1]{Oza2}. We show that wq-cocycles naturally give rise to almost invariant CND kernels. As a consequence, we obtain the following.

\begin{thm}\label{Thm_a-TTT}
Every a-TTT-menable group admits a proper uniformly Lipschitz affine action on a subspace of an $L^1$ space.
\end{thm}

Since Theorem \ref{Thm_caract} provides a characterisation, it is natural to ask which classes of groups do not admit proper almost invariant CND kernels. From the definition, we see that the map $f$ in \eqref{def_CND} is a coarse embedding into $\mathcal{H}$; we refer the reader to \cite[\S 1]{Wil} for details on coarse embeddings. Therefore the existence of a proper almost invariant CND kernel implies the existence of a coarse embedding into a Hilbert space. Although this is a very weak property, it is known that there are groups that are not coarsely embeddable. The study of such groups was initiated in \cite{Gro}. Surprisingly, this seems to be the only known obstruction. In particular, we do not know whether $\operatorname{SL}(3,\mathbb{Z})$ admits a proper almost invariant CND kernel; or equivalently, whether it has a proper uniformly Lipschitz affine action on a subspace of an $L^1$ space. This marks a sharp contrast with the case of $E\subseteq L^p$ with $1<p<\infty$, for which we know the answer is negative; see \cite[Corollary 1.3]{dlS} or \cite[Theorem A]{dLadlS}.

This article is organised as follows. In Section \ref{S_ker}, we give the definitions and basic facts about the different kinds of kernels that we consider in the paper. Theorem \ref{Thm_caract} and Corollary \ref{Cor_Lp} are also proved in that section. Section \ref{S_QT} is devoted to groups acting on products of quasi-trees and two different proofs of Theorem \ref{Thm_QT}. Theorems \ref{Thm_WA} and \ref{Thm_a-TTT} are proved in Sections \ref{S_WA} and \ref{S_a-TTT} respectively.

\subsection*{Acknowledgements}
I am grateful to Mikael de la Salle for many interesting discussions and for his very valuable feedback since an early stage of this work. I also thank the anonymous referees of a previous version of this paper, whose comments and suggestions led to several improvements.

\section{{\bf CND kernels and actions on $E\subseteq L^p$}}\label{S_ker}

In this section we prove Theorem \ref{Thm_caract} and Corollary \ref{Cor_Lp}. We begin by giving the definitions and main characterisations of positive definite kernels and conditionally negative definite kernels. For a more detailed treatment, we refer the reader to \cite[Appendix C]{BedlHVa}.

\subsection{Kernels}
Let $X$ be a set. A function $\kappa:X\times X\to\mathbb{C}$ is said to be a \textit{positive definite kernel} on $X$ if, for every finite sequence $z_1,\ldots,z_n\in\mathbb{C}$ and $x_1,\ldots,x_n\in X$,
\begin{align*}
\sum_{i,j=1}^n\kappa(x_i,x_j)z_i\overline{z_j}\geq 0.
\end{align*}
If $\xi:X\to\mathcal{H}$ is a map from $X$ to a Hilbert space, then the kernel
\begin{align*}
\kappa(x,y)=\langle\xi(x),\xi(y)\rangle
\end{align*}
is positive definite. Moreover, every positive definite kernel on $X$ is of this form.

A symmetric function $\psi:X\times X\to\mathbb{R}$ is said to be a \textit{conditionally negative definite kernel} on $X$ if it vanishes on the diagonal ($\psi(x,x)=0,\ \forall x\in X$), and for every finite sequence $\alpha_1,\ldots,\alpha_n\in\mathbb{R}$ satisfying
\begin{align*}
\sum_{i=1}^n\alpha_i=0,
\end{align*}
and every $x_1,\ldots,x_n\in X$, we have
\begin{align*}
\sum_{i,j=1}^n\psi(x_i,x_j)\alpha_i\alpha_j\leq 0.
\end{align*}
If $b:X\to\mathcal{H}$ is a map from $X$ to a Hilbert space, then the kernel
\begin{align*}
\psi(x,y)=\lVert b(x)-b(y)\rVert ^2
\end{align*}
is CND. Moreover, every CND kernel on $X$ is of this form. A very important result relating these two objects is Schoenberg's theorem \cite{Sch}. 

\begin{thm}[Schoenberg]\label{Thm_Scho}
Let $X$ be a set and let $\psi:X\times X\to\mathbb{R}$ be a symmetric function such that $\psi(x,x)=0$ for all $x\in X$. Then the following are equivalent:
\begin{itemize}
\item[(i)] The kernel $\psi$ is conditionally negative definite.
\item[(ii)] For all $\lambda>0$, the kernel $e^{-\lambda\psi}$ is positive definite.
\end{itemize}
\end{thm}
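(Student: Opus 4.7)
The plan is to establish the two implications separately. The forward direction (i) $\Rightarrow$ (ii) relies on the Hilbert-space representation of $\psi$ stated just above the theorem, together with Schur's product theorem, while the converse uses a limiting argument built around the fact that $1-\phi$ is conditionally negative definite whenever $\phi$ is a normalized positive definite kernel.

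For (i) $\Rightarrow$ (ii), I would first apply the Hilbert-space representation of conditionally negative definite kernels recalled above Theorem \ref{Thm_Scho} to write $\psi(x,y)=\|b(x)-b(y)\|^2$ for some $b:X\to H$. Expanding the squared norm yields
\begin{align*}
e^{-t\psi(x,y)} = e^{-t\|b(x)\|^2}\, e^{-t\|b(y)\|^2}\, e^{2t\langle b(x),b(y)\rangle}.
\end{align*}
Setting $f(x)=e^{-t\|b(x)\|^2}$, the first two factors form the rank-one positive definite kernel $f(x)f(y)$, and multiplication by such a kernel preserves positive definiteness. For the remaining factor, expand as a power series
\begin{align*}
e^{2t\langle b(x),b(y)\rangle} = \sum_{n=0}^\infty \frac{(2t)^n}{n!}\,\langle b(x),b(y)\rangle^n.
\end{align*}
Each $\langle b(x),b(y)\rangle^n$ is positive definite since it equals the inner product $\langle b(x)^{\otimes n},b(y)^{\otimes n}\rangle$ in the $n$-fold Hilbert tensor power of $H$ (this is essentially Schur's product theorem applied iteratively). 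Positive definiteness is preserved by nonnegative linear combinations and by pointwise limits, so the full sum, and hence $e^{-t\psi}$, is positive definite.

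For (ii) $\Rightarrow$ (i), the key observation is that if $\phi$ is a positive definite kernel satisfying $\phi(x,x)=1$ for all $x$, then $1-\phi$ is conditionally negative definite: it vanishes on the diagonal, and for $\alpha_1,\ldots,\alpha_n\in\R$ with $\sum_i\alpha_i=0$,
\begin{align*}
\sum_{i,j=1}^n (1-\phi(x_i,x_j))\,\alpha_i\alpha_j = \Bigl(\sum_{i=1}^n\alpha_i\Bigr)^2 - \sum_{i,j=1}^n \phi(x_i,x_j)\alpha_i\alpha_j \leq 0.
\end{align*}
Applying this to $\phi=e^{-t\psi}$, which satisfies $\phi(x,x)=1$ because $\psi(x,x)=0$, we conclude that the kernel $(1-e^{-t\psi})/t$ is conditionally negative definite for every $t>0$. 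Letting $t\to 0^+$ gives pointwise convergence $(1-e^{-t\psi(x,y)})/t \to \psi(x,y)$, and the class of conditionally negative definite kernels is closed under pointwise limits, so $\psi$ itself is conditionally negative definite.

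The main technical ingredient is Schur's product theorem in the forward direction; the tensor-power interpretation of $\langle b(x),b(y)\rangle^n$ makes this transparent and avoids any combinatorial calculation. The remaining steps, namely the closure of both classes of kernels under pointwise limits, the closure of positive definite kernels under nonnegative linear combinations, and the rank-one multiplication trick $\phi \mapsto f(x)f(y)\phi(x,y)$, are routine verifications directly from the definitions.
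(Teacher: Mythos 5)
Your proof is correct. The paper itself does not prove this theorem but simply cites Schoenberg's classical result, and your argument is precisely the standard proof (as in, e.g., Appendix C of the Bekka--de la Harpe--Valette reference the paper points to): for (i)$\Rightarrow$(ii) you legitimately invoke the representation $\psi(x,y)=\|b(x)-b(y)\|^2$ stated just before the theorem, expand $e^{-t\psi}$ into the rank-one factor $f(x)f(y)$ times the exponential series of $2t\langle b(x),b(y)\rangle$ whose terms are Gram kernels of tensor powers, and for (ii)$\Rightarrow$(i) you use that $1-\phi$ is conditionally negative definite for normalized positive definite $\phi$ together with the pointwise limit $(1-e^{-t\psi})/t\to\psi$; all the closure properties you rely on are immediate from the definitions.
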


The following result is a particular case of \cite[Lemma 2.4]{CoTeVa}, which in turn is a reformulation of \cite[Theorem 8]{Sch2} in a more modern language. We include its proof for completeness.

\begin{lem}\label{Lem_log_CND}
Let $X$ be a set and let $\psi_0:X\times X\to[0,\infty)$ be a CND kernel. Then the kernel $\psi:X\times X\to[0,\infty)$ given by
\begin{align*}
\psi(x,y)=\log(1+\psi_0(x,y)),\quad\forall x,y\in X
\end{align*}
is also CND.
\end{lem}
\begin{proof}
For all $r\geq 0$, we can write
\begin{align*}
\log(1+r)=\int_0^r\int_0^\infty e^{-t(1+u)}\,dt\,du=\int_0^\infty\frac{1-e^{-tr}}{t}e^{-t}\, dt.
\end{align*}
Since $\psi_0$ is CND, by Theorem \ref{Thm_Scho}, so is $1-e^{-t\psi_0}$ for all $t>0$. Since the set of CND kernels on $X$ is a convex cone, the identity above tells us that $\log(1+\psi_0)$ is CND.
\end{proof}

\subsection{$L^p$ spaces}
Let $(\Omega,\mu)$ be a measure space and let $p\in(0,\infty)$. The space $L^p(\Omega,\mu)$ is the vector space of (equivalence classes of) measurable functions $f:\Omega\to\mathbb{R}$ such that
\begin{align*}
\lVert f\rVert_p=\bigg(\int_\Omega |f(\omega)|^p\,d\mu(\omega)\bigg)^{1/p} <\infty.
\end{align*}
For $p\in[1,\infty)$, $L^p(\Omega,\mu)$ is a Banach space. For $p\in(0,1)$, this is no longer the case, but we still get a metric space for the distance
\begin{align*}
(f,g)\mapsto\lVert f-g\rVert_p^p,
\end{align*}
so it makes sense to speak about proper actions. We say that $X$ is an $L^p$ space if $X=L^p(\Omega,\mu)$ for some measure space $(\Omega,\mu)$. We record here the following classical result; see \cite[Lemma 7]{Now3} for a proof.

\begin{prop}\label{Prop_Lp_CND}
Let $p\in(0,2]$ and let $X$ be an $L^p$ space. Then the map
\begin{align*}
(f,g)\mapsto\lVert f-g\rVert_p^p
\end{align*}
is a CND kernel on $X$.
\end{prop}

\subsection{Proof of Theorem \ref{Thm_caract}}
We will start by proving the implication (ii)$\implies$(i) in Theorem \ref{Thm_caract}, which essentially follows from Lemma \ref{Lem_log_CND} and Proposition \ref{Prop_Lp_CND}.

\begin{lem}\label{Lem(ii)(i)}
Let $\Gamma$ be a countable group admitting a proper uniformly Lipschitz affine action on a subspace of an $L^1$ space. Then there exists a proper almost invariant CND kernel $\psi:\Gamma\times\Gamma\to[0,\infty)$.
\end{lem}
\begin{proof}
Let $\Gamma\curvearrowright^\sigma E$ be the proper uniformly Lipschitz action given by the hypothesis, and take $v_0\in E$. By Proposition \ref{Prop_Lp_CND}, the kernel
\begin{align*}
\psi_0(x,y)=\lVert \sigma(x)v_0-\sigma(y)v_0\rVert,\quad\forall x,y\in\Gamma,
\end{align*}
is CND. Let $C_0\geq 1$ be the Lipschitz constant of $\sigma$. Then, for all $s,x,y\in\Gamma$,
\begin{align*}
\psi_0(sx,sy) \leq C_0\psi_0(x,y).
\end{align*}
Moreover, by Lemma \ref{Lem_log_CND}, the kernel
\begin{align*}
\psi=\log(1+\psi_0)
\end{align*}
is CND too. This kernel satisfies
\begin{align*}
\psi(sx,sy)\leq\psi(x,y)+C,\quad\forall s,x,y\in\Gamma,
\end{align*}
with $C=\log C_0\geq 0$. Moreover, the map $x\mapsto\psi(x,e)$ is proper because $\sigma$ is proper.
\end{proof}

The implication (i)$\implies$(ii)' is based on a variation of the GNS construction. Our main reference for this is \cite[Theorem C.2.3]{BedlHVa}.

\begin{lem}\label{Lem(i)(ii)'}
Let $\Gamma$ be a countable group such that there exists a proper almost invariant CND kernel $\psi:\Gamma\times\Gamma\to[0,\infty)$. Then, for every $\varepsilon>0$, there exist a measure space $(\Omega,\mu)$, a closed subspace $E\subseteq L^1(\Omega,\mu)$, and a proper $(1+\varepsilon)$-Lipschitz affine action of $\Gamma$ on $E$.
\end{lem}
\begin{proof}
Let $\varepsilon>0$. Recall that, by definition, there is $C\geq 0$ such that
\begin{align*}
\psi(sx,sy)\leq\psi(x,y) + C,\quad\forall s,x,y\in\Gamma.
\end{align*}
Since $(\varepsilon^2/C)\psi$ is also a proper almost invariant CND kernel, by rescaling, we may assume that
\begin{align}\label{psi_eps}
\psi(sx,sy)\leq\psi(x,y) + \varepsilon^2,\quad\forall s,x,y\in\Gamma.
\end{align}
Let $V$ be the vector space of finitely supported, real-valued functions on $\Gamma$ with mean $0$. In other words, for every $v\in V$,
\begin{align*}
\sum_{x\in \Gamma}v(x)=0,
\end{align*}
where only finitely many summands are non-zero. We define a scalar product on $V$ as follows. For $v,w\in V$, we set
\begin{align}\label{def_prod_a}
\langle v,w\rangle_\psi = -\sum_{x,y\in \Gamma}v(x)w(y)\psi(x,y) + \sum_{x\in \Gamma}v(x)w(x).
\end{align}
Notice that this is indeed a positive definite bilinear form because $\psi$ is CND. The last term in \eqref{def_prod_a} ensures that $\langle v,v\rangle_\psi>0$ for all $v\in V\setminus\{0\}$. We define a Hilbert space $\mathcal{H}_\psi$ as the completion of $V$ for $\langle\cdot,\cdot\rangle_\psi$. Now let $E$ be the subspace of $\mathcal{H}_\psi$ of all those $v$ such that
\begin{align*}
\lVert v\rVert_1=\sum_{x\in \Gamma}|v(x)| < \infty.
\end{align*}
We endow $E$ with the norm
\begin{align}\label{norm_E}
\lVert v\rVert_E=\lVert v\rVert_\psi + \lVert v\rVert_1,
\end{align}
for which it becomes a Banach space. Since $\mathcal{H}_\psi$ is an $L^2$ space, it embeds linearly and isometrically into an $L^1$ space; see \cite[Th\'eor\`eme 2]{BrDaKr} or \cite[Theorem 6.8]{ChDrHa}. More precisely, there is a measure space $(\Omega',\mu')$ and a linear isometry $J:\mathcal{H}_\psi\to L^1(\Omega',\mu')$. Now let us define a new map $\tilde{J}:E\to L^1(\Omega',\mu')\oplus_1\ell^1(\Gamma)$ by
\begin{align*}
v\mapsto (Jv,v),\quad\forall v\in E,
\end{align*}
where $\oplus_1$ stands for the $\ell^1$-direct sum. We can identify $L^1(\Omega',\mu')\oplus_1\ell^1(\Gamma)\cong L^1(\Omega,\mu)$, where $\Omega=\Omega'\sqcup\Gamma$ and $\mu$ is the sum of $\mu'$ and counting measure on $\Gamma$. Then $\tilde{J}$ is a linear isometry into $L^1(\Omega,\mu)$. We define a representation $\pi$ of $\Gamma$ on $V$ by
\begin{align}\label{def_rep}
\pi(s)v(x)=v(s^{-1}x),\quad\forall s\in\Gamma,\ \forall v\in V.
\end{align}
Observe that 
\begin{align*}
\lVert \pi(s)v\rVert_\psi^2 = - \sum_{x,y\in \Gamma}v(x)v(y)\psi(sx,sy) + \sum_{x\in \Gamma} v(x)^2.
\end{align*}
Thus
\begin{align*}
\lVert \pi(s)v\rVert_\psi^2 -\lVert v\rVert_\psi^2 = \sum_{x,y\in \Gamma}v(x)v(y)(\psi(x,y)-\psi(sx,sy)).
\end{align*}
Moreover, we know from \eqref{psi_eps} that
\begin{align*}
|\psi(x,y)-\psi(sx,sy)|\leq \varepsilon^2,\quad\forall s,x,y\in\Gamma,
\end{align*}
which implies that
\begin{align*}
\lVert \pi(s)v\rVert_\psi^2 -\lVert v\rVert_\psi^2 \leq \varepsilon^2 \sum_{x,y\in \Gamma}|v(x)| |v(y)| = \varepsilon^2 \lVert v\rVert_1^2. 
\end{align*}
Therefore
\begin{align*}
\lVert \pi(s)v\rVert_\psi &\leq \Big(\varepsilon^2 \lVert v\rVert_1^2 + \lVert v\rVert_\psi^2\Big)^{1/2}\\
&\leq \varepsilon\lVert v\rVert_1+\lVert v\rVert_\psi.
\end{align*}
We conclude that
\begin{align*}
\lVert \pi(s)v\rVert_E &= \lVert \pi(s)v\rVert_\psi + \lVert \pi(s)v\rVert_1 \\
&\leq (\varepsilon+1)\big(\lVert v\rVert_\psi+\lVert v\rVert_1\big)\\
&=(\varepsilon+1)\lVert v\rVert_E.
\end{align*}
This shows that $\pi$ extends to a uniformly bounded representation on $E$ with uniform bound $(1+\varepsilon)$. We only need to construct a proper cocycle to conclude. Let us define $b:\Gamma\to E$ by
\begin{align*}
b(s)=\delta_s-\delta_e,\quad\forall s\in\Gamma,
\end{align*}
where
\begin{align*}
\delta_x(y)=\begin{cases}
1 & y=x\\ 0 & y\neq x.
\end{cases}
\end{align*}
Observe that $b(s)$ belongs to $V$ for all $s\in\Gamma$, so it is well defined as an element of $E$. Moreover,
\begin{align*}
b(st)&=\pi(s)(\delta_t-\delta_e)+\delta_s-\delta_e\\
&=\pi(s)b(t)+b(s),
\end{align*}
for all $s,t\in\Gamma$, so $b$ is indeed a cocycle. Finally,
\begin{align*}
\lVert b(s)\rVert_E &= \lVert \delta_s-\delta_e\rVert_\psi + \lVert \delta_s-\delta_e\rVert_1\\
&\geq (2\psi(s,e)+2)^{1/2}.
\end{align*}
This shows that $b$ is proper because $\psi$ is proper.
\end{proof}

Lemmas \ref{Lem(ii)(i)} and \ref{Lem(i)(ii)'} prove the implications (ii)$\implies$(i)$\implies$(ii)' in Theorem \ref{Thm_caract}. Since (ii)' clearly implies (ii), we conclude that the three conditions are equivalent. This finishes the proof of the theorem. 

The proof of Corollary \ref{Cor_Lp} uses the same ideas as that of Lemma \ref{Lem(ii)(i)}.

\begin{proof}[Proof of Corollary \ref{Cor_Lp}]
Let $p\in(0,2]$ and let $A$ be a subset of an $L^p$ space such that there is a proper uniformly Lipschitz action $\Gamma\curvearrowright^\sigma A$. Take $v_0\in A$ and define
\begin{align*}
\psi_0(x,y)=\lVert \sigma(x)v_0-\sigma(y)v_0\rVert^p,\quad\forall x,y\in\Gamma.
\end{align*}
By Proposition \ref{Prop_Lp_CND}, $\psi_0$ is a CND kernel. Let $C_0\geq 1$ be the Lipschitz constant of $\sigma$. Then, for all $s,x,y\in\Gamma$,
\begin{align*}
\psi_0(sx,sy) \leq C_0^p\psi_0(x,y).
\end{align*}
This shows that the CND kernel $\psi=\log(1+\psi_0)$ satisfies
\begin{align*}
\psi(sx,sy)\leq\psi(x,y)+C,\quad\forall s,x,y\in\Gamma,
\end{align*}
with $C=p\log C_0$. The map $x\mapsto\psi(x,e)$ is proper because $\sigma$ is proper. By Theorem \ref{Thm_caract}, $\Gamma$ has a proper uniformly Lipschitz affine action on $E\subseteq L^1$.
\end{proof}

\begin{rmk}
In the case that $A$ is a subspace (not merely a subset) of an $L^p$ space with $p\in[1,2]$, this result was known, and it is simply a consequence of the fact that every $L^p$ space embeds isometrically into an $L^1$ space; see \cite[Th\'eor\`eme 2]{BrDaKr} or \cite[Theorem 6.8]{ChDrHa}. For the other cases, we believe it is new.
\end{rmk}

\section{{\bf Groups acting on products of quasi-trees}}\label{S_QT}

This section is devoted to the proof of Theorem \ref{Thm_QT}. We shall give two proofs of this result. The first one is based on Kerr's characterisation of quasi-trees in terms of $(1,C)$-quasi-isometries \cite{Ker}. The second one is inspired by Bo\.zejko and Picardello's work on Schur multipliers on trees \cite{BozPic}, and its subsequent generalisations.

\subsection{Quasi-isometries and the first proof}

We begin by defining quasi-isometries and quasi-trees. For a more detailed treatment, we refer the reader to \cite{Man} and \cite{Ker}.

Let $(X,d_X)$ and $(Y,d_Y)$ be two metric spaces and let $L\geq 1$, $C>0$. We say that $f:X\to Y$ is an $(L,C)$-\textit{quasi-isometry} if
\begin{itemize}
\item[1)] For all $x_1,x_2\in X$,
\begin{align*}
\frac{1}{L}d_X(x_1,x_2)-C\leq d_Y(f(x_1),f(x_2))\leq L d_X(x_1,x_2)+C.
\end{align*}
\item[2)] For all $y\in Y$, there is $x\in X$ such that $d_Y(f(x),y)\leq C$.
\end{itemize}
If such a map exists, we say that $(X,d_X)$ and $(Y,d_Y)$ are $(L,C)$-\textit{quasi-isometric}. We say that they are \textit{quasi-isometric} if they are $(L,C)$-quasi-isometric for some $L\geq 1$, $C>0$.

We will view a connected graph as a metric space $(X,d)$. This means that $X$ is the set of vertices of the graph, and $d$ is the edge-path distance on it. A connected graph is called a \textit{tree} if every two vertices are connected by exactly one simple path. It is a \textit{quasi-tree} if it is quasi-isometric to a tree. The following was proved in \cite[Theorem 1.3]{Ker}.

\begin{thm}[Kerr]\label{Thm_Kerr}
If $X$ is a quasi-tree, then there exists $C\geq 0$ such that $X$ is $(1,C)$-quasi-isometric to a tree.
\end{thm}

The following result was first proved by Haagerup for free groups; see \cite[Lemma 1.2]{Haa}. It is the reason why we speak of the \textit{Haagerup property}. It was extended to trees in \cite{Wat}; see also \cite[Lemma 2.3]{JulVal}.

\begin{thm}[Haagerup, Watatani]\label{Thm_Haa_Wat}
The edge-path distance on a tree is a CND kernel.
\end{thm}

Now we can give our first proof of Theorem \ref{Thm_QT}.

\begin{proof}[First proof of Theorem \ref{Thm_QT}]
Let $(X_1,d_1),\ldots,(X_N,d_N)$ be $N$ quasi-trees. By Theorem \ref{Thm_Kerr}, for each $i\in\{1,\ldots,N\}$, there is a $(1,C_i)$-quasi-isometry $g_i:X_i\to T_i$, where $T_i$ is a tree and $C_i\geq 0$. Let $X=X_1\times\cdots\times X_N$ and let $d_X$ be the $\ell^1$-combination of the metrics $d_1,\ldots d_N$ as in \eqref{l1_combi}. We define $(T,d_T)$ similarly. Let $g:X\to T$ be given by
\begin{align*}
g(x_1,\ldots,x_N)=(g_1(x_1),\ldots,g_N(x_N)),\quad\forall (x_1,\ldots,x_N)\in X.
\end{align*}
We define $\psi:X\times X\to[0,\infty)$ by
\begin{align*}
\psi(x,y)=d_T(g(x),g(y)),\quad\forall x,y\in X.
\end{align*}
By Theorem \ref{Thm_Haa_Wat}, $\psi$ is the sum of CND kernels, and therefore it is CND too. Now let $\Gamma$ be a group acting properly by isometries on $(X,d_X)$, and fix $x_0\in X$. By restricting $\psi$ to the orbit $\Gamma\cdot x_0$, we can view it as a CND kernel on $\Gamma$. We claim that it is almost invariant and proper. Indeed, defining $C=C_1+\cdots+C_N$, we have
\begin{align}\label{ineq_g}
d_X(x,y)-C\leq d_T(g(x),g(y)) \leq d_X(x,y)+C,\quad\forall x,y\in X.
\end{align}
In particular,
\begin{align*}
\psi(s\cdot x,s\cdot y)\leq\psi(x,y)+2C,
\end{align*}
for all $x,y\in\Gamma\cdot x_0$ and $s\in\Gamma$. Finally, the inequality
\begin{align*}
\psi(s\cdot x_0,x_0)\geq d_X(s\cdot x_0,x_0) - C
\end{align*}
shows that $\psi$ is proper. By Theorem \ref{Thm_caract}, we conclude that $\Gamma$ has a proper uniformly Lipschitz affine action on a subspace of an $L^1$ space.
\end{proof}

\begin{rmk}
The previous proof does not make use of the graph structure on $X_i$, only that it is $(1,C)$-quasi-isometric to a tree. Although we only deal with graphs here, quasi-trees can be defined more generally as geodesic metric spaces that are quasi-isometric to trees. Since the results in \cite{Ker} still hold in that level of generality, the previous proof works in that context too.
\end{rmk}

\subsection{The kernel $d_a$ and the second proof}
For the second proof of Theorem \ref{Thm_QT}, we will use a construction that makes sense for any connected graph, but which becomes particularly interesting for quasi-trees. As before, the main goal is to construct an almost invariant CND kernel on a product of quasi-trees.

Let $(X,d)$ be a connected graph. For $a\in X$ and $k\in\mathbb{N}$, let
\begin{align*}
B(a,k)=\{x\in X\,\mid\, d(a,x)\leq k\}.
\end{align*}
The following definition was given in (an earlier version of) \cite{CDHL}. For every $x,y\in X$, we set
\begin{align}\label{def_R_a}
R_a(x,y)=\min\Bigg\{k\geq 0\,\Bigm\vert \begin{array}{c}
\text{There is no connected component of }\\ X\setminus B(a,k) \text{ containing both }x\text{ and } y.
\end{array} \Bigg\}.
\end{align}
Since $x$ does not belong to $X\setminus B(a,d(a,x))$, we always have
\begin{align*}
R_a(x,y)\leq \min\{d(a,x),d(a,y)\}.
\end{align*}

The following result is known as the bottleneck property. It can be thought of as a stronger form of hyperbolicity for quasi-trees; see \cite[Lemma 2.16]{Man}.

\begin{lem}[Manning]\label{Lem_Manning}
Let $(X,d)$ be a quasi-tree. There exists $\Delta>0$ such that $X$ satisfies the following: Let $x,y\in X$, and let $p$ be a point lying in a geodesic between $x$ and $y$. Then every path joining $x$ to $y$ must pass within $\Delta$ of $p$. In particular, $X$ is $\Delta$-hyperbolic.
\end{lem}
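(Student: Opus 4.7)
The plan is to push the configuration into a genuine tree via the quasi-isometry, exploit the strong separation property of trees (removing a single vertex disconnects it), and pull the conclusion back via the quasi-isometric embedding inequality.

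Fix a $(\lambda,c)$-quasi-isometric embedding $f\colon X\to T$ with $T$ a simplicial tree. First I would map the picture into $T$: the edge-path geodesic $[x,y]$ is sent by $f$ to a $(\lambda,c)$-quasi-geodesic in $T$, and since trees are $0$-hyperbolic, the Morse lemma forces this quasi-geodesic to lie within a constant $M=M(\lambda,c)$ of the unique tree geodesic $[f(x),f(y)]_T$. Pick a vertex $q$ on $[f(x),f(y)]_T$ with $d_T(q,f(p))\leq M$. If $q$ coincides with $f(x)$ or $f(y)$, then $d_X(p,x)$ or $d_X(p,y)$ is at most $\lambda(M+c)$, and since $\gamma$ begins at $x$ and ends at $y$ the conclusion is already trivial; so assume $q$ is an interior vertex of the tree geodesic, so that $T\setminus\{q\}$ separates $f(x)$ from $f(y)$.

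Next I would track the path $\gamma=(x=\gamma_0,\gamma_1,\ldots,\gamma_n=y)$ through $f$. Since $d_X(\gamma_i,\gamma_{i+1})=1$, one has $d_T(f(\gamma_i),f(\gamma_{i+1}))\leq\lambda+c$. The sequence $(f(\gamma_i))$ starts in the component of $T\setminus\{q\}$ containing $f(x)$ and ends in a different one, so at the first step at which it changes component the unique tree geodesic joining $f(\gamma_i)$ and $f(\gamma_{i+1})$ is forced to pass through $q$, giving
\begin{align*}
d_T(f(\gamma_i),q)\leq d_T(f(\gamma_i),f(\gamma_{i+1}))\leq\lambda+c.
\end{align*}
Combining with $d_T(q,f(p))\leq M$ and then applying the lower quasi-isometry bound yields
\begin{align*}
d_X(\gamma_i,p)\leq\lambda\bigl(d_T(f(\gamma_i),f(p))+c\bigr)\leq\lambda(\lambda+2c+M),
\end{align*}
and I would take $\Delta$ to be this constant (which already dominates the trivial edge cases above).

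For the final clause, the implication ``bottleneck property $\Rightarrow$ $\Delta$-hyperbolic'' is a one-line observation: given a geodesic triangle $xyz$ and a point $p\in[x,y]$, the concatenation $[x,z]\cup[z,y]$ is a path from $x$ to $y$ in $X$, so by the bottleneck conclusion it passes within $\Delta$ of $p$; this is exactly Rips' thin-triangles condition with constant $\Delta$. The main obstacle I foresee is cleanly invoking the Morse lemma for trees---if one does not wish to cite it as a black box, one has to prove quasi-geodesic stability in trees by hand from the definition of quasi-tree. The remainder is a short separation-of-components argument.
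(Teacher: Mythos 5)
The paper offers no proof of this lemma at all: it is quoted directly from Manning \cite[Lemma 2.6]{Man}, so there is no internal argument to compare yours against. Your reconstruction is correct and is essentially the standard argument behind Manning's bottleneck criterion: push the configuration into a tree via the quasi-isometric embedding $f$, use stability of quasi-geodesics (Morse lemma, with $\delta=0$) to place $f(p)$ within a constant $M(\lambda,c)$ of the tree geodesic $[f(x),f(y)]_T$, use that an interior point $q$ of that geodesic separates $f(x)$ from $f(y)$ in the tree, note that the image of an edge-path moves by at most $\lambda+c$ per step and hence cannot change component without coming $(\lambda+c)$-close to $q$, and pull the resulting bound back to $X$ with the lower quasi-isometry inequality; the deduction of $\Delta$-hyperbolicity from the bottleneck conclusion is also correct as written. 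Three small points should be tidied up, none of them fatal: (i) the stability theorem you invoke is usually stated for quasi-geodesics defined on intervals, whereas $f([x,y])$ is a discrete quasi-geodesic, so you should either interpolate (the standard ``taming'' step) or cite a discrete version explicitly; (ii) the point on $[f(x),f(y)]_T$ produced by stability need not be a vertex, so either round it to the nearest vertex (changing the constant by at most $1$) or work in the geometric realization of $T$, where deleting \emph{any} point separates; (iii) the phrase ``the first step at which it changes component'' tacitly assumes no $f(\gamma_i)$ equals $q$; if some $f(\gamma_i)=q$ you are done at once, so just split into these two cases. With these routine patches your argument is a complete proof of the cited statement, with an explicit constant $\Delta=\lambda(\lambda+2c+M)$.
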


Here, by $\delta$-hyperbolic space, we mean a geodesic metric space such that all the geodesic triangles are $\delta$-thin; see \cite[Definition 2.6]{Man}.

For $a,x,y\in X$, the Gromov product $(x,y)_a$ is defined as
\begin{align*}
(x,y)_a=\tfrac{1}{2}(d(a,x)+d(a,y)-d(x,y)).
\end{align*}
We will need the following lemma. For a proof, see for instance \cite[Lemme 17]{GhydlH}.

\begin{lem}\label{Lem_Gom_prod}
Let $(X,d)$ be a $\delta$-hyperbolic space. Let $a,x,y\in X$ and let $[x,y]$ be a geodesic path between $x$ and $y$. Then
\begin{align*}
(x,y)_a \leq d(a,[x,y]) \leq (x,y)_a + \delta.
\end{align*}
\end{lem}

The following lemma shows that, on a quasi-tree, $R_a(x,y)$ coincides with the Gromov product $(x,y)_a$ up to bounded error.

\begin{lem}\label{Lem_Delta'}
Let $(X,d)$ be a quasi-tree and let $\Delta>0$ be the constant given by Lemma \ref{Lem_Manning}. Then, for all $a,x,y\in X$,
\begin{align*}
d(x,y)-4\Delta\leq d(a,x)+d(a,y)-2R_a(x,y)\leq d(x,y).
\end{align*}
\end{lem}
\begin{proof}
We begin with the second inequality, which is independent of the geometry of the graph. Let $\gamma$ be a geodesic path joining $x$ to $y$, and let $u$ be the vertex in $\gamma$ closest to $a$. Observe that $d(a,u)\leq R_a(x,y)$. Therefore
\begin{align*}
d(a,x)+d(a,y) &\leq d(a,u) + d(u,x) + d(a,u) + d(u,y)\\
&\leq 2R_a(x,y) + d(x,y).
\end{align*}
Now we turn to the first inequality. Let $u$ be as above and let $\gamma'$ be any path joining $x$ to $y$. By Lemma \ref{Lem_Manning}, there is a vertex $w$ in $\gamma'$ such that $d(u,w)\leq\Delta$. This shows that $\gamma'$ is not contained in $X\setminus B(a,d(a,u)+\Delta)$. Hence
\begin{align*}
R_a(x,y)\leq d(a,u) + \Delta.
\end{align*}
On the other hand, by Lemma \ref{Lem_Gom_prod},
\begin{align*}
d(a,u) \leq (x,y)_a + \Delta.
\end{align*}
Hence
\begin{align*}
2 R_a(x,y)&\leq 2d(a,u)+2\Delta\\
&\leq d(a,x) + d(a,y) - d(x,y) + 4 \Delta.
\end{align*}
\end{proof}

Given a connected graph $(X,d)$ and $a\in X$, we define $d_a:X\times X\to\mathbb{N}$ by
\begin{align}\label{def_d_a}
d_a(x,y)= d(a,x)+d(a,y)-2R_a(x,y),\quad\forall x,y\in X.
\end{align}
Observe that $d_a$ is a symmetric kernel because $R_a$ is symmetric. Moreover, it vanishes on the diagonal because $R_a(x,x)=d(a,x)$ for all $x$. For quasi-trees, the inequality given by Lemma \ref{Lem_Delta'} can be written as
\begin{align}\label{ineq_d_a}
d(x,y)-4\Delta\leq d_a(x,y)\leq d(x,y),\quad\forall a,x,y\in X.
\end{align}
This inequality should be compared with \eqref{ineq_g} in the first proof of Theorem \ref{Thm_QT}. The kernel $d_a$ will play the role of the pull-back of the metric on a tree. Therefore, our next task is proving that $d_a$ is a CND kernel. We will see that this is true for any connected graph, and we will achieve it by showing that the kernel $(x,y)\mapsto r^{d_a(x,y)}$ is positive definite for all $r\in(0,1)$.

\begin{lem}\label{Lem_S_a(x)}
Let $(X,d)$ be a connected graph. For all $a,x\in X$, we have the following equality of sets:
\begin{align*}
\{R_a(x,y)\,\mid\, y\in X\}=\{0,...,d(a,x)\},
\end{align*}
where $R_a(x,y)$ is defined as in \eqref{def_R_a}.
\end{lem}
\begin{proof}
Let $\gamma:\{0,...,d(a,x)\}\to X$ be a geodesic path joining $a$ to $x$. Then
\begin{align*}
R_a(\gamma(k),x)=k,
\end{align*}
for all $k\in\{0,...,d(a,x)\}$. Since $R_a(x,y)\leq d(a,x)$ for every $y\in X$, these are the only possible values.
\end{proof}

For all $a,x\in X$, $k\in\mathbb{N}$, let us define the set
\begin{align}\label{def_V_a(x,k)}
\mathcal{V}_a(x,k)=\{y\in X\,\mid\, R_a(x,y)\geq k\}.
\end{align}
This may be viewed as the ball of center $x$ and radius $e^{-k}$ for the metric $\sphericalangle_a$ given by
\begin{align*}
\sphericalangle_a(x,y)=\begin{cases}
e^{-R_a(x,y)}, & y\neq x\\
0, & y=x.
\end{cases}
\end{align*}
It was proven in (the first version of) \cite{CDHL} that $\sphericalangle_a$ satisfies the ultrametric inequality. We reproduce the proof here for completeness.

\begin{lem}
Let $(X,d)$ be a connected graph. For all $a,x,y,z\in X$,
\begin{align*}
R_a(x,y)\geq\min\{R_a(x,z),R_a(z,y)\}.
\end{align*}
In particular,
\begin{align}\label{um_ineq}
\sphericalangle_a(x,y)\leq\max\{\sphericalangle_a(x,z),\sphericalangle_a(z,y)\}.
\end{align}
\end{lem}
\begin{proof}
Let $a,x,y,z\in X$. We will assume that $\min\{R_a(x,z),R_a(z,y)\}>0$ since otherwise there is nothing to prove. Take $k=\min\{R_a(x,z),R_a(z,y)\}-1$. By definition, $x$ and $z$ belong to the same connected component of $X\setminus B(a,k)$. The same holds for $z$ and $y$, and therefore $x$ and $y$ belong to the same connected component of $X\setminus B(a,k)$. This implies that $R_a(x,y)>k$. Equivalently
\begin{align*}
R_a(x,y)\geq\min\{R_a(x,z),R_a(z,y)\}.
\end{align*}
The second inequality follows from the fact that the function $t\mapsto e^{-t}$ is decreasing.
\end{proof}

\begin{lem}\label{Lem_B_a=B_a}
Let $(X,d)$ be a connected graph. For all $x,y\in X$ and all $k\in\{0,...,d(a,x)\}$, $j\in\{0,...,d(a,y)\}$,
\begin{align*}
\mathcal{V}_a(x,k)=\mathcal{V}_a(y,j) \iff k=j \text{ and } R_a(x,y)\geq k,
\end{align*}
where $\mathcal{V}_a$ is defined as in \eqref{def_V_a(x,k)}.
\end{lem}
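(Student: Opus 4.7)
The plan is to prove the two directions separately, with the reverse direction being a direct application of the ultrametric inequality for $R_a$ recorded just above the lemma, and the forward direction requiring us to exhibit a concrete witness, for which Lemma \ref{Lem_S_a(x)} is tailor-made.

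For the reverse direction, suppose $k=j$ and $R_a(x,y)\geq k$. Given $z\in\mathcal{V}_a(x,k)$, the ultrametric inequality gives
\begin{align*}
R_a(y,z)\geq\min\{R_a(y,x),R_a(x,z)\}\geq\min\{k,k\}=k=j,
\end{align*}
so $z\in\mathcal{V}_a(y,j)$. The reverse inclusion is obtained by swapping the roles of $x$ and $y$ (and of $k$ and $j$).

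For the forward direction, the starting point is the simple observation that $R_a(x,x)=d(a,x)$: indeed, $x$ is (trivially) connected to itself in $X\setminus B(a,m)$ precisely when $x\in X\setminus B(a,m)$, i.e.\ when $m<d(a,x)$. Since $k\leq d(a,x)$, this gives $x\in\mathcal{V}_a(x,k)=\mathcal{V}_a(y,j)$, hence $R_a(y,x)\geq j$; symmetrically $R_a(x,y)\geq k$. In particular the second half of the conclusion ($R_a(x,y)\geq k$, and once $k=j$ is known, also $\geq j$) is already secured.

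It remains to show $k=j$. Argue by contradiction and assume without loss of generality $k>j$. Here Lemma \ref{Lem_S_a(x)} applied to $y$ provides the crucial witness: since $j\in\{0,\ldots,d(a,y)\}$, there exists $w\in X$ (take $w$ to be the point at distance $j$ from $a$ on a geodesic from $a$ to $y$) with $R_a(y,w)=j$. Then $w\in\mathcal{V}_a(y,j)=\mathcal{V}_a(x,k)$, so $R_a(x,w)\geq k$. Applying the ultrametric inequality a final time,
\begin{align*}
R_a(y,w)\geq\min\{R_a(y,x),R_a(x,w)\}\geq\min\{k,k\}=k>j,
\end{align*}
contradicting $R_a(y,w)=j$. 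The main obstacle in the whole argument is exactly this step—extracting a point that realises the radius $j$ exactly and not merely as a lower bound—which is precisely why Lemma \ref{Lem_S_a(x)} was proved first.
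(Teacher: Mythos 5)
Your proof is correct and follows essentially the same route as the paper: the ultrametric inequality for $R_a$ handles the reverse implication, and Lemma \ref{Lem_S_a(x)} supplies the exact-value witness that forces $k=j$ in the forward implication. The only cosmetic difference is that the paper first recentres the ball at $y$ (via ``every element of a ball is its centre'') and then invokes Lemma \ref{Lem_S_a(x)}, whereas you apply the ultrametric inequality directly to the witness $w$; both arguments are equivalent.
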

\begin{proof}
Recall that the ultrametric inequality \eqref{um_ineq} implies that every element of a ball is its centre. If we assume that $k=j$ and $R_a(x,y)\geq k$, then $x$ belongs to $\mathcal{V}_a(y,k)$, which implies that $\mathcal{V}_a(x,k)=\mathcal{V}_a(y,k)$. Conversely, assume that $\mathcal{V}_a(x,k)=\mathcal{V}_a(y,j)$. Without loss of generality, we may suppose that $k\leq j$. Since $y$ belongs to $\mathcal{V}_a(x,k)$, the ultrametric inequality tells us that
\begin{align*}
\mathcal{V}_a(y,k)=\mathcal{V}_a(x,k)=\mathcal{V}_a(y,j).
\end{align*}
By Lemma \ref{Lem_S_a(x)}, $k=j$. This finishes the proof.
\end{proof}

Now we are ready to prove that $(x,y)\mapsto r^{d_a(x,y)}$ is positive definite. The proof is inspired by \cite[Proposition 2.1]{BozPic}, where the authors estimate the norms of certain Schur multipliers on trees. The idea is to fix a point at infinity, and then, for each vertex, follow a geodesic ray towards that fixed point. This idea has been extremely fruitful in the study of Schur multipliers and weak amenability, and it has been adapted to several more general contexts. In our case, we seek to construct a positive definite kernel, which is a particular case of Schur multiplier; see \cite[\S 2]{BozPic}. Intuitively, what we will do is fix a vertex $a$ and attach to it an infinite geodesic ray. This will play the role of the point at infinity.

\begin{prop}\label{Prop_pos_def}
Let $(X,d)$ be a connected graph and $a\in X$. Let $d_a:X\times X\to\mathbb{N}$ be the kernel defined in \eqref{def_d_a}. For all $r\in(0,1)$, the kernel
\begin{align*}
(x,y)\in X^2 \mapsto r^{d_a(x,y)}
\end{align*}
is positive definite.
\end{prop}
\begin{proof}
Let $a\in X$ and $r\in(0,1)$. For each $x\in X$ and $k\in\{0,\ldots,d(a,x)\}$, define
\begin{align*}
\Omega(x,k)=\mathcal{V}_a(x,d(x,a)-k),
\end{align*}
where $\mathcal{V}_a$ is defined as in \eqref{def_V_a(x,k)}. In the case of a tree, if we denote by $x=x_0,\ldots,x_n=a$ the unique geodesic path joining $x$ to $a$, $\Omega(x,k)$ is the half-space containing $x_k$, but not $x_{k+1}$. Hence we recover the idea of following a geodesic path towards a fixed point. Let $\mathcal{P}(X)$ denote the power set of $X$. Recall that, for any set $Y$, the Hilbert space $\ell^2(Y)$ has a canonical orthonormal basis $(\delta_y)_{y\in Y}$ defined by
\begin{align*}
\delta_y(w)=\begin{cases}
1 & w=y\\ 0 & w\neq y.
\end{cases}
\end{align*}
Define $\xi:X\to\ell^2(\mathcal{P}(X))\oplus\ell^2(\mathbb{N})$ by
\begin{align*}
\xi(x)&=(1-r^2)^{\frac{1}{2}}\Bigg(\sum_{k=0}^{d(a,x)-1}r^{k}\delta_{\Omega(x,k)}, \sum_{k\geq d(a,x)}r^{k}\delta_{k-d(a,x)}\Bigg),
\end{align*}
for all $x\in X$. Here $\ell^2(\mathbb{N})$ can be viewed as the $\ell^2$ space of an infinite geodesic ray attached to $a$. We have, for all $x,y\in X$,
\begin{align*}
\langle \xi(x),\xi(y)\rangle &=(1-r^2)\sum_{k=0}^{d(a,x)-1}\sum_{j=0}^{d(a,y)-1}r^{k+j}\langle\delta_{\Omega(x,k)},\delta_{\Omega(y,j)}\rangle\\
&\qquad + (1-r^2)\sum_{k\geq d(a,x)}\sum_{j\geq d(a,y)}r^{k+j}\langle\delta_{k-d(a,x)},\delta_{j-d(a,y)}\rangle.
\end{align*}
By Lemma \ref{Lem_B_a=B_a}, $\Omega(x,k)=\Omega(y,j)$ if and only if
\begin{align*}
d(a,x)-k=d(a,y)-j \qquad\text{and}\qquad R_a(x,y)\geq d(a,x)-k.
\end{align*}
Therefore
\begin{align*}
\sum_{k=0}^{d(a,x)-1}\sum_{j=0}^{d(a,y)-1}r^{k+j}\langle\delta_{\Omega(x,k)},\delta_{\Omega(y,j)}\rangle
&= \sum_{k=d(a,x)-R_a(x,y)}^{d(a,x)-1}r^{2k-d(a,x)+d(a,y)}\\
&= r^{d_a(x,y)}\sum_{k=0}^{R_a(x,y)-1}r^{2k}.
\end{align*}
On the other hand,
\begin{align*}
\sum_{k\geq d(a,x)}\sum_{j\geq d(a,y)}r^{k+j}\langle\delta_{k-d(a,x)},\delta_{j-d(a,y)}\rangle
&= \sum_{k\geq d(a,x)}r^{2k-d(a,x)+d(a,y)}\\
&= r^{d_a(x,y)}\sum_{k\geq R_a(x,y)}r^{2k}.
\end{align*}
This shows that
\begin{align*}
\langle \xi(x),\xi(y)\rangle &= (1-r^2) r^{d_a(x,y)}\sum_{k\geq 0}r^{2k}\\
&= r^{d_a(x,y)}.
\end{align*}
We conclude that $(x,y)\mapsto r^{d_a(x,y)}$ is a positive definite kernel.
\end{proof}

\begin{cor}\label{Cor_d_a_cnd}
Let $X$ be a connected graph. For all $a\in X$, the kernel
\begin{align*}
(x,y)\in X^2 \mapsto d_a(x,y)
\end{align*}
is conditionally negative definite.
\end{cor}
\begin{proof}
This follows directly from Proposition \ref{Prop_pos_def} and Theorem \ref{Thm_Scho}.
\end{proof}

Now we are ready to give our second proof of Theorem \ref{Thm_QT}.

\begin{proof}[Second proof of Theorem \ref{Thm_QT}]
Let $(X_1,d_1),\ldots,(X_N,d_N)$ be $N$ quasi-trees, and fix $a=(a_1,...,a_N)\in X$. For all $i\in\{1,\ldots,N\}$, let $d_{a_i}:X_i\to\mathbb{N}$ be the kernel defined as in \eqref{def_d_a}. By Corollary \ref{Cor_d_a_cnd}, $d_{a_i}$ is a CND kernel. Let $X=X_1\times\cdots\times X_N$, and define $\psi:X\times X\to\mathbb{N}$ by
\begin{align*}
\psi(x,y)=\sum_{i=1}^N d_{a_i}(x_i,y_i),\quad \forall x,y\in X.
\end{align*}
This kernel is CND because it is a sum of CND kernels. Moreover, by Lemma \ref{Lem_Delta'}, there is a constant $C\geq 0$ such that
\begin{align*}
d(x,y)-C \leq \psi(x,y)\leq d(x,y),\quad\forall x,y\in X,
\end{align*}
where
\begin{align*}
d(x,y)=\sum_{i=1}^Nd_i(x_i,y_i).
\end{align*}
In particular, if $\Gamma$ is a group acting properly by isometries on $X$, then
\begin{align*}
\psi(s\cdot x,s\cdot y)\leq\psi(x,y)+C,\quad\forall s\in\Gamma,\ \forall x,y\in X.
\end{align*}
Restricting $\psi$ to the orbit $\Gamma\cdot a$, we get a proper almost invariant CND kernel on $\Gamma$. By Theorem \ref{Thm_caract}, we obtain a proper uniformly Lipschitz affine action on $E\subseteq L^1$.
\end{proof}

\section{{\bf Weakly amenable groups}}\label{S_WA}

Now we focus on weakly amenable groups and the proof of Theorem \ref{Thm_WA}. For this purpose, we shall look at a weaker property introduced by Knudby \cite{Knu}. We say that a countable group $\Gamma$ has the weak Haagerup property if there exists a sequence of functions $\varphi_n:\Gamma\to\mathbb{C}$ vanishing at infinity and converging pointwise to 1 such that $\sup_n\lVert \varphi_n\rVert _{B_2(\Gamma)}< \infty$. We will only be interested in the weak Haagerup property with constant 1, meaning that the condition above holds with $\sup_n\lVert \varphi_n\rVert _{B_2(\Gamma)}\leq 1$. 
We will not recall the precise definition of the norm $\lVert \cdot\rVert _{B_2(\Gamma)}$ because we will not use it in our argument. Instead, we refer the interested reader to \cite[Appendix D]{BroOza}.

Since every finitely supported function vanishes at infinity, every weakly amenable group $\Gamma$ with $\boldsymbol\Lambda(\Gamma)=1$ has the weak Haagerup property with constant 1. The following is a consequence of \cite[Theorem 1.2]{Knu} and \cite[Proposition 4.3]{Knu}.

\begin{thm}[Knudby]\label{Thm_Knu}
Let $\Gamma$ be a group with the weak Haagerup property with constant 1. Then there exist a proper function $\varphi:\Gamma\to\mathbb{R}$, a Hilbert space $\mathcal{H}$, and maps $R,S:\Gamma\to\mathcal{H}$ such that
\begin{align*}
\varphi(y^{-1}x)=\lVert R(x)-R(y)\rVert ^2+\lVert S(x)+S(y)\rVert ^2,\quad\forall x,y\in\Gamma.
\end{align*}
In particular, $\varphi(e)=4\lVert S(x)\rVert ^2$ for all $x\in\Gamma$.
\end{thm}

This theorem allows us to prove the following result, which implies Theorem \ref{Thm_WA}.

\begin{thm}
Let $\Gamma$ be a group with the weak Haagerup property with constant 1. Then $\Gamma$ has a proper uniformly Lipschitz affine action on a subspace of an $L^1$ space.
\end{thm}
\begin{proof}
Let $R, S:\Gamma\to\mathcal{H}$ be the maps given by Theorem \ref{Thm_Knu}, and define a CND kernel $\psi:\Gamma\times\Gamma\to[0,\infty)$ by
\begin{align*}
\psi(x,y)=\lVert R(x)-R(y)\rVert ^2,\quad\forall x,y\in\Gamma.
\end{align*}
For all $s,x,y\in\Gamma$,
\begin{align*}
\psi(sx,sy)&=\lVert R(sx)-R(sy)\rVert ^2\\
&=\varphi(y^{-1}x)-\lVert S(sx)+S(sy)\rVert ^2\\
&=\lVert R(x)-R(y)\rVert ^2 + \lVert S(x)+S(y)\rVert ^2 - \lVert S(sx)+S(sy)\rVert ^2\\
&\leq \psi(x,y) + \varphi(e),
\end{align*}
where $\varphi$ is the function given by Theorem \ref{Thm_Knu}. Moreover,
\begin{align*}
\psi(x,e)= \varphi(x)-\lVert S(x)+S(e)\rVert ^2 \geq \varphi(x)-\varphi(e),
\end{align*}
which tends to infinity as $x\to\infty$ because $\varphi$ is proper. We conclude that $\psi$ is a proper almost invariant CND kernel. By Theorem \ref{Thm_caract}, $\Gamma$ has a proper uniformly Lipschitz affine action on a subspace of an $L^1$ space.
\end{proof}

\section{{\bf A-TTT-menable groups}}\label{S_a-TTT}

Recall that a countable group $\Gamma$ is a-TTT-menable if it admits a proper wq-cocycle. A map $b:\Gamma\to\mathcal{H}$ is called a wq-cocycle if there exists $\pi:\Gamma\to\mathcal{U}(\mathcal{H})$ such that
\begin{align*}
\sup_{s,t\in\Gamma}\lVert b(st)-\pi(s)b(t)-b(s)\rVert < \infty.
\end{align*}
The reason behind this terminology lies in the fact that a-TTT-menability is a strong negation of Property (TTT), as defined \cite{Oza2}; in the same way as a-T-menability is a strong negation of Property (T).

The proof of Theorem \ref{Thm_a-TTT} consists essentially in noticing that wq-cocycles give rise to almost invariant CND kernels.

\begin{proof}[Proof of Theorem \ref{Thm_a-TTT}]
Let $\Gamma$ be an a-TTT-menable group, and let $b:\Gamma\to\mathcal{H}$ be a proper wq-cocycle associated to $\pi:\Gamma\to\mathcal{U}(\mathcal{H})$. Then there exists a constant $C>0$ such that
\begin{align*}
\lVert b(st)-\pi(s)b(t)-b(s)\rVert \leq C,\quad\forall s,t\in\Gamma.
\end{align*}
Let us define $\psi:\Gamma\times\Gamma\to[0,\infty)$ by
\begin{align*}
\psi(x,y)=\lVert b(x)-b(y)\rVert,\quad\forall x,y\in\Gamma.
\end{align*}
We claim that $\psi$ is a proper almost invariant CND kernel. Indeed, it is CND because $\psi^2$ is CND; see \cite[Lemma 6]{Now3}. Now, for all $s,x,y\in\Gamma$,
\begin{align*}
\psi(sx,sy)&=\lVert b(sx)-b(sy)\rVert\\
&\leq \lVert b(sx)-\pi(s)b(x)-b(s)\rVert + \lVert \pi(s)b(x)-\pi(s)b(y)\rVert\\
&\quad + \lVert b(s)+\pi(s)b(y)-b(sy)\rVert\\
&\leq 2C + \lVert b(x)-b(y)\rVert\\
&= \psi(x,y) + 2C.
\end{align*}
Finally, $\psi(x,e)$ tends to infinity as $x\to\infty$ because $b$ is proper. By Theorem \ref{Thm_caract}, we conclude that $\Gamma$ has a proper uniformly Lipschitz affine action on a subspace of an $L^1$ space.
\end{proof}

As pointed out in \cite{Oza2}, hyperbolic groups are a-TTT-menable. Hence, by Theorem \ref{Thm_a-TTT}, every hyperbolic group has a proper uniformly Lipschitz affine action on a subspace of an $L^1$ space. This is essentially what was done in \cite[Theorem 1.2]{Ver}, although a-TTT-menability is not explicitly mentioned in that paper.

\bibliographystyle{plain} 

\bibliography{Bibliography}

\end{document}